\newtheorem{theorem}{Theorem}[section]
\newtheorem{corollary}[theorem]{Corollary}
\theoremstyle{definition}
\newtheorem{definition}[theorem]{Definition}
\newtheorem{example}[theorem]{Example}
\theoremstyle{remark}
\newtheorem{remark}[theorem]{Remark}
\numberwithin{equation}{section}
\newtheorem{question}[theorem]{Question}
\newtheorem{counterexample}[theorem]{Counterexample}
\renewcommand{\[}{\left[}
\newcommand{\mC}{\ensuremath{\mathbb{C}}}
\newcommand{\mD}{\ensuremath{\mathbb{D}}}
\newcommand{\mN}{\ensuremath{\mathbb{N}}}
\begin{document}

\title{Some Normality Criteria and a Counterexample to the Converse of Bloch's Principle}

\author[K. S. Charak]{Kuldeep Singh Charak}
\address{
\begin{tabular}{lll}
&Kuldeep Singh Charak\\
&Department of Mathematics\\
&University of Jammu\\
&Jammu-180 006\\ 
&India\\
\end{tabular}}
\email{kscharak7@rediffmail.com}

\author[S. Sharma]{Shittal Sharma$^*$}
\address{
\begin{tabular}{lll}
&Shittal Sharma\\
&Department of Mathematics\\
&University of Jammu\\
&Jammu-180 006\\
&India
\end{tabular}}
\email{shittalsharma{\_}mat07@rediffmail.com}

\begin{abstract}
In this paper we continue our earlier investigations on normal
families of meromorphic functions\cite{CS2}. Here, we  prove some
value distribution results which lead to some  normality criteria
for a family of meromorphic functions involving the sharing of a
holomorphic function by more general differential polynomials
generated by members of the family and get some recently known
results extended and improved. In particular, the main result 
of this paper leads to a counterexample to the converse of 
Bloch's principle.
\end{abstract}

\let\thefootnote\relax\footnote{{\it 2010 Mathematics Subject Classification:} 30D30, 30D35, 30D45.}
\let\thefootnote\relax\footnote{{\it Keywords: } Value Distribution Theory, Normal Families, Meromorphic
Functions, Differential Polynomials, Sharing.\\
The work of the second author is supported by University Grants Commission(UGC), INDIA (No.F.17-77/08(SA-1)) .}
\let\thefootnote\relax\footnote{*: Corresponding author}

\maketitle

\section{Introduction and Main Results}
A family $\mathcal{F}$ of meromorphic functions in a complex
domain $D$ is said to be {\it normal} in $D$ if every sequence in
$\mathcal{F}$ has a subsequence that converges uniformly on
compact subsets of $D$ with respect to the spherical metric. 
The concept of normality was introduced in
1907 by P. Montel \cite{PM}. Though normal families play a central
role in complex dynamics, yet it is a subject of great interest in
its own right. For normal families of meromorphic functions, the
reader may refer to Joel Schiff's book \cite{JS}, Zalcman's survey
article \cite{LZ}, Drasin's paper \cite{DD} out of a huge
literature on the subject. {\it To find out normality criteria} is
a common research problem in the theory of normal families. It is
David Drasin \cite{DD} who brought Nevanlinna value distribution
theory \cite{WK} in the study of normality of families of
meromorphic functions and Wilhelm Schwick \cite{SCH} introduced
the concept of sharing of values in the study of normal families.
In this paper we  prove a value distribution result leading to some 
interesting normality criteria one of which leads to a construction 
of a counterexample to the converse of the Bloch's principle. These 
normality criteria in fact involve the sharing of holomorphic functions
by a more general class of differential polynomials and get some 
recently known results generalized and improved. This work, in fact, is in 
continuation to our earlier work \cite{CS2}.\\

\medskip

Let $f \in \mathcal {F}$ and  $h(z)$ be a holomorphic function on
$D$. Let $k, l_0, l_1,l_2, \cdots, l_k$, $ m_1, m_2, \cdots, m_k$
be non-negative integers with $l' = \sum^{k}_{i=1}{l_i}$ and
 $ m' = \sum^{k}_{i=1}{m_i} $ and let
 $$P[f]= f^{l_0}(f^{l_1})^{(m_1)}(f^{l_2})^{(m_2)} \cdots (f^{l_k})^{(m_k)}, (k \geq 1)$$
  be a differential polynomial of $f \in \mathcal{F}$ with degree $\gamma _{P} = l_0 + l',$ where
  $ l_0 > 0 \text{ and } l_i \geq m_i, \text{ for all } i: 1\leq i \leq k$ with $l'>m'>0.$\\
  Further, we can see that
  $$(f^{l_i})^{(m_i)} = \sum{C_{n_0 n_1 n_2 \cdots n_{m_i}} f^{n_0} (f')^{n_1} (f'')^{n_2} \cdots (f^{(m_i)})^{n_{m_i}}}$$
  is such that $\sum^{m_i}_{j=0}{n_j} = l_i \text{ and } \sum^{m_i}_{j=1}{jn_j} = m_i$.
  Thus, weight $$w((f^{l_i})^{(m_i)}) = \max\left\{\sum^{m_i}_{j=0}{(j+1)n_j}\right\} = \max (m_i + l_i) = l_i +m_i $$
  and so $$w(P[f])= l_0 + \sum^{k}_{i=1}{(l_i + m_i)} = l_0 + l' + m' = \gamma_{P} + m'.$$
  It is assumed that the reader is familiar with the standard notions used in the Nevanlinna value distribution theory such as
  $ m(r,f), N(r,f), T(r,f), S(r,f),$ etc.(see \cite{WK}).

 \medskip

 \begin{definition}
 We say that the two meromorphic functions $f$ and $g$ in a domain $D$ {\bf share} the function $h$ IM in $D$ if
 $\overline{E}(h,f)= \overline{E}(h,g),$ where $\overline{E}(h,\phi) =  \{ z \in D: \phi(z)-h(z)=o \}$, the set of
 zeros of $\phi - h$ in $D$ counted with ignoring multiplicities. If $\overline{E}(h,f) \subseteq \overline{E}(h,g),$
 then we say that $f$  {\bf shares} $h$ {\bf partially}
 with $g$ on $D.$
 \end{definition}

 G. Dethloff, T.V. Tan and N.V. Thin (\cite{DTT}, Corollary 2, p- 676 ) proved the
 following Picard type theorem:

 \medskip

 {\bf {Theorem A.}} \  \textit{ Let $a$ be a non-zero complex value, $l_0$ be a non-negative integer,
 and $l_1, l_2, \cdots, l_k$, $ m_1, m_2, \cdots, m_k$ be positive integers. Let $\mathcal{F}$ be a family of meromorphic
 functions in a complex domain $D$ such that for any $f \in \mathcal{F}$, $P[f]-a$ is no-where vanishing on $D$. Assume that \\
 (a) $l_j \geq m_j$; $ \forall  j:1\leq j \leq k  $\\
 (b) $l_0 +l' \geq 3+m'.$\\
 Then $\mathcal{F}$ is normal in $D$. }\\

 \medskip

 By replacing the condition, "$P[f]-a$ is no-where vanishing on $D$" with the condition "$P[f]$ and $P[g]$ share $a$ IM on $D$
 for every pair $f,g \in \mathcal{F}$" in Theorem A, G.Dutt and S.Kumar (\cite{DK}, Theorem 1.4, p- 2) obtained the
 following result:

 \medskip

{\bf {Theorem B.}} \  \textit{ Let $(0\neq) a \in \mC,$ $l_0$
be a non-negative integer and $l_1, l_2, \cdots, l_k$,
$m_1, m_2, \cdots,m_k$ be positive integers such that \\
 (a) $l_j \geq m_j$; for all $j: 1 \leq j \leq k $\\
 (b) $l_0 +l' \geq 3+m'.$\\
 Let $\mathcal{F} $ be a family of meromorphic functions in a domain $D$ such that for every pair $f,g \in \mathcal{F}$,
 $P[f]$ and $P[g]$ share $a$ IM on $D$. Then $\mathcal{F}$ is normal in $D.$ }\\

\medskip

It is now natural to consider the following more general question:
\begin{question}
Is the family $\mathcal {F}$ normal in $D$ if for each pair of
functions $f$ and $g$ in $\mathcal{F}$ the corresponding
differential polynomials $P[f]$ and $P[g]$ share a holomorphic
function $h$ IM?
\label {Question}
\end{question}

Here, in this paper we answer Question \ref{Question} as follows:

\begin{theorem}
  Let $\mathcal{F}$ be a family of non-constant meromorphic functions on a domain $D$ such that each $f \in \mathcal{F}$ has
   poles, if any, of multiplicity at least $l_0$. Let $h \not\equiv 0$ be a holomorphic function on
  $D$ having only zeros of multiplicity at most $ l_0 -1.$  If $P[f]$ and $P[g]$ share $h$ IM on $D$ for each pair
  $f,g \in \mathcal{F}$, then $\mathcal{F}$ is normal in $D.$
\label{THM4}
\end{theorem}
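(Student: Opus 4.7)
The plan is to prove Theorem~\ref{THM4} by contradiction using the Pang--Zalcman rescaling lemma, reducing the normality problem to a Picard-type value-distribution statement on $\mathbb{C}$ that is then ruled out via the sharing hypothesis and Nevanlinna theory. Suppose $\mathcal{F}$ is not normal at some $z_0 \in D$. Set $\alpha := m'/\gamma_P$ where $\gamma_P = l_0 + l'$. Since the poles of members of $\mathcal{F}$ have multiplicity at least $l_0 > 0$ and $\alpha > -l_0$, the Pang--Zalcman lemma produces sequences $f_n \in \mathcal{F}$, $z_n \to z_0$, $\rho_n \to 0^+$ such that $g_n(\zeta) := \rho_n^{-\alpha}\, f_n(z_n + \rho_n \zeta)$ converges spherically locally uniformly on $\mathbb{C}$ to a nonconstant meromorphic function $g$ of finite order whose poles also have multiplicity at least $l_0$. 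The exponent $\alpha$ is chosen precisely so that the weight of $P$ cancels under rescaling: a direct computation using the weight identity $w(P) = \gamma_P + m'$ gives $P[f_n](z_n + \rho_n \zeta) = \rho_n^{\alpha \gamma_P - m'} P[g_n](\zeta) = P[g_n](\zeta)$, and hence $P[g_n](\zeta) - h(z_n + \rho_n \zeta) \to P[g](\zeta) - h(z_0)$ locally uniformly.

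Next, I would exploit the sharing hypothesis via Hurwitz. Fix any $f_0 \in \mathcal{F}$. The degenerate case $P[f_0] \equiv h$ is handled separately: IM sharing then forces $P[f] \equiv h$ for every $f \in \mathcal{F}$, and the clash between the guaranteed zero order $\gamma_P - m' \geq l_0 + 1$ of $P[f]$ at any zero of $f$ and the cap $\leq l_0 - 1$ on zero multiplicities of $h$ implies each $f \in \mathcal{F}$ is entire and non-vanishing, so that a Marty-type argument applied to the restricted family delivers normality. Otherwise $P[f_0] - h \not\equiv 0$ has only isolated zeros in $D$. The key claim is then that $P[g] - h(z_0)$ is either identically zero or has at most one zero in $\mathbb{C}$: indeed, if it had two distinct zeros $\zeta_1 \neq \zeta_2$, Hurwitz's theorem would supply, for all large $n$, distinct points $w_n^{(i)} := z_n + \rho_n \zeta_n^{(i)} \to z_0$ ($i=1,2$) with $P[f_n](w_n^{(i)}) = h(w_n^{(i)})$; by IM sharing with $f_0$ each $w_n^{(i)}$ would be a zero of $P[f_0] - h$, and infinitely many such distinct zeros accumulating at $z_0$ would contradict isolation.

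The proof thus reduces to the Picard-type claim that no nonconstant meromorphic function $g$ on $\mathbb{C}$ with poles of multiplicity at least $l_0$ can satisfy $P[g] \equiv c$ or have $P[g] - c$ with at most one zero, for the constant $c = h(z_0)$. I would establish this through Nevanlinna's second main theorem applied to $P[g]$, exploiting the pole-order bookkeeping $N(r, P[g]) \geq (l_0 \gamma_P + m')\, \overline{N}(r, g)$ forced by the pole multiplicity hypothesis, together with the elementary bound $\overline{N}(r, g) \leq (1/l_0)\, N(r, g)$. The subcases $c \neq 0$ and $c = 0$ demand separate treatment, the latter re-invoking the zero-multiplicity cap $\leq l_0 - 1$ on $h$. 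The main technical obstacle is closing this Nevanlinna calculation: the hypothesis $l_0 + l' \geq 3 + m'$ used in Theorems A and B is not assumed here, and its role must be played entirely by the new pole multiplicity condition; threading the degree $\gamma_P$, weight $\gamma_P + m'$, and pole-order lower bound $l_0$ through the second main theorem estimate for $P[g]$ with exceptional value $c$ is the essential step.
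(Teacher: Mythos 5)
Your reduction for the case $c = h(z_0) \neq 0$ tracks the paper's Case~1 closely and is sound in outline (the weight cancellation $P[f_n](z_n+\rho_n\zeta)=P[g_n](\zeta)$ with $\alpha=m'/(l_0+l')$, the Hurwitz/sharing argument against a fixed $f_0$, and the dichotomy ``$P[g]\equiv c$ or $P[g]-c$ has at most one zero'' are all exactly right). But the case $c=h(z_0)=0$ is a genuine gap, and the Picard-type statement you propose to reduce to there is simply false: for $g(z)=e^{z}$ one has $P[g](z)=\prod_{i=1}^{k}(l_i)^{m_i}\,e^{(l_0+l')z}$, which is zero-free, so a nonconstant $g$ can perfectly well have $P[g]-0$ with no zeros. (The paper's Example~1.2 is precisely a non-normal family whose members' differential polynomials share $0$.) Consequently no amount of ``re-invoking the zero-multiplicity cap'' inside a Nevanlinna computation on $\mathbb{C}$ can close this case: the cap on the zeros of $h$ has to act \emph{before} passing to the limit, not after. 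The paper's mechanism is to write $h(z)=z^{m}h_1(z)$ near the non-normality point, with $m\leq l_0-1$ and $h_1(0)\neq 0$, and to rescale with the modified exponent $(m+m')/(l_0+l')$, which lies in $(0,1)$ precisely because $m<l_0$ and $m'<l'$; after a further dichotomy on $z_j/\rho_j$ the comparison function in the limit becomes $\zeta^{m}\not\equiv 0$ rather than the constant $0$, and one must separately verify $P[H]\not\equiv\zeta^{m}$. This is the step where both distinguished hypotheses of the theorem (poles of $f$ of multiplicity $\geq l_0$, zeros of $h$ of multiplicity $\leq l_0-1$) actually do their work, and it is absent from your proposal.

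A second, smaller gap: even for $c\neq 0$, the second main theorem can only deliver ``at least two distinct zeros of $P[g]-c$'' when $g$ is transcendental (the paper's Theorem~2.1). When $g$ is a nonconstant rational function, $T(r,P[g])=O(\log r)$ and the SMT estimate degenerates; the paper needs a separate, entirely elementary partial-fractions argument (Theorem~2.2), in which the pole-multiplicity hypothesis $p_j\geq l_0$ and the degree bound $\deg\omega<l_0$ carry the bookkeeping, to exclude $P[g]-\omega$ having at most one zero. Your sketch folds both cases into a single Nevanlinna computation, which cannot work in the rational case. On the positive side, your explicit treatment of the degenerate possibility $P[f_0]\equiv h$ (forcing $f_0$ entire and zero-free via the multiplicity clash) addresses a point the paper leaves implicit.
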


\begin{example}
Let $D=\mD,$ the open unit disk. Consider the family 
$$\mathcal{F} = \{f_n : f_n (z)= e^{nz^2}, z \in \mD \}$$
of meromorphic functions on $\mD.$ Then $f_n ^2 = e^{2nz^2},$ and $f_n ' =2nze^{nz^2}.$
Let $P[f] =f(f^2)' = 2f^2 f'.$ Then $P[f_n](z)=2f_n ^2 (z) f_n '(z) = 4nze^{3nz^2}.$\\
Therefore, for distinct $m,n$, we see that $P[f_m] \text{ and } P[f_n]$ share $h \equiv 0,$ IM. But the family $\mathcal{F}$ fails to be normal at $z=0$ in $\mD,$ since $f_n(0)=1; \forall n$ and $f_n(z)\longrightarrow \infty,$ for all $z\neq 0 $ in $\mD.$
\label{ex1}
\end{example}
 Example \ref{ex1} shows that the condition $h\not\equiv 0$ in Theorem \ref{THM4} is essential.\\

A direct consequence of Theorem \ref {THM4} is the following important result:

\begin{corollary}
Let $\mathcal{F}$ be a family of non-constant meromorphic functions on a domain $D$. Let $h \not\equiv 0$ be a holomorphic function such that $h(z)\neq 0$ in $D$. If $P[f]-h$ has no zero in $D; \forall f\in \mathcal{F}$, then $\mathcal{F}$ is normal in $D$.
\label{COR}
\end{corollary}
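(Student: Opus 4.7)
The plan is to apply Theorem \ref{THM4} directly, with only trivial verifications required. First, I would observe that since $h(z) \neq 0$ everywhere in $D$, the holomorphic function $h$ has no zeros in $D$ whatsoever. Hence the hypothesis of Theorem \ref{THM4} that $h$ has only zeros of multiplicity at most $l_0 - 1$ is vacuously satisfied, independently of the value of $l_0$.

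Next, I would translate the hypothesis ``$P[f] - h$ has no zero in $D$'' into the language of the sharing definition (Definition 1.1): it asserts precisely that $\overline{E}(h, P[f]) = \emptyset$ for every $f \in \mathcal{F}$. Consequently, for any two members $f, g \in \mathcal{F}$, one has the trivial equality $\overline{E}(h, P[f]) = \emptyset = \overline{E}(h, P[g])$, and so $P[f]$ and $P[g]$ share $h$ IM (vacuously). With the pole-multiplicity condition on $\mathcal{F}$ inherited from the standing setup of Theorem \ref{THM4}, every hypothesis of that theorem is now in place, and invoking it yields normality of $\mathcal{F}$ in $D$.

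Since the two nontrivial conditions of Theorem \ref{THM4} (the zero condition on $h$ and the sharing condition on $P[f]$) reduce respectively to an empty set of $h$-zeros and an empty pair of $\overline{E}$-sets, there is no substantive obstacle at all: the corollary is genuinely a ``direct consequence'' in which the IM-sharing hypothesis is met in its most degenerate form, namely by vacuity.
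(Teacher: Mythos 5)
Your reduction is the same one the paper intends: the corollary is stated there as ``a direct consequence of Theorem \ref{THM4}'', and your two observations --- that a zero-free $h$ vacuously satisfies the multiplicity-at-most-$l_0-1$ condition, and that ``$P[f]-h$ has no zero for every $f$'' forces $\overline{E}(h,P[f])=\emptyset=\overline{E}(h,P[g])$, so the IM-sharing hypothesis holds vacuously --- are both correct and are exactly what makes the citation work.

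There is, however, one step you have papered over. Theorem \ref{THM4} carries the hypothesis that each $f\in\mathcal{F}$ has poles, if any, of multiplicity at least $l_0$, whereas Corollary \ref{COR} imposes no such restriction; there is no ``standing setup'' from which this condition is ``inherited''. As written, your argument only proves the corollary for families satisfying that extra pole condition. To get the corollary as stated you must look inside the proof of Theorem \ref{THM4}: because $h$ is zero-free, only Case-1 of that proof ($h(0)\neq 0$) ever arises, and there the comparison function is the nonzero \emph{constant} $h_0=h(0)$, i.e.\ Theorem \ref{THM3} is applied with $\omega$ of degree $m=0$ --- precisely the case in which the paper notes, immediately after the statement of Theorem \ref{THM3}, that no restriction on the multiplicity of poles is needed. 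Either add that observation (so the pole hypothesis can genuinely be dropped), or else concede that your proof establishes the corollary only under the additional pole-multiplicity assumption.
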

Corollary \ref{COR} is important in the sense that it leads to the construction of a counterexample to the converse of the Bloch's principle.\\

The Bloch's principle(see \cite{WB}) states that a family of holomorphic (meromorphic) functions satisfying a property $\textbf{P}$ in a domain $D$ is likely to be  normal if the property $\textbf{P}$ reduces every holomorphic (meromorphic) function on $\mC$ to a constant. The Bloch's principle is not universally true, for example one can see \cite{LAR}.\\

The converse of the Bloch's principle states that if a family of 
meromorphic functions satisfying a property $\textbf{P}$ on an 
arbitrary domain $D$ is  normal, then every meromorphic function 
on $\mC$ with property $\textbf{P}$ reduces to a constant. Like 
Bloch's principle, its converse is not true. For counterexamples 
one can see \cite{CJ} \cite{CV}, \cite{IL}, \cite{BQL}, \cite{JS}, 
\cite{XC},  and \cite{YZ}.

\begin{counterexample}
Let $P[f]= f(f^3)''= f(3f^2 f')'= 3f^3 f'' + 6 f^2 f'^2$\\
and let $f(z)=e^{-z}$ be defined on $\mC$. Then 
$$P[f](z)= 3e^{-3z} e^{-z} + 6e^{-2z}e^{-2z} = 9e^{-4z}$$
Take $h(z)= e^{-4z},$ such that $h \not\equiv 0$ and $h$ is 
holomorphic in $\mC$ and hence in every domain $D\subseteq \mC,$ 
and also $h(z)\neq 0, \forall z \in D.$ Then $(P[f]-h)(z)= 8e^{-4z}$ 
has no zeros in $\mC.$ \\
Note that $f$ is non-constant, which violates the statement of the 
converse of the Bloch's Principle in view of Corollary \ref{COR}.  
\end{counterexample} 
 Next we discuss normality of $\mathcal {F}$ when  $P[f]-h$ has zeros 
 under different situations as follows:

 \begin{theorem}
Let $\mathcal{F}$ be a family of non-constant meromorphic
functions on a domain $D$. Let $h$ be a holomorphic function on
$D$ such that $h(z)\neq 0$ in $D$. If, for each $f \in
\mathcal{F}$, any one of the following three conditions holds:\\
(i)~~~~ $(P[f]-h)(z)$ has at most one zero,\\
(ii)~~~ $(P[f]-h)(z)=0$ implies $|f(z)|\geq M$, for some $M>0,$\\
(iii)~~ $(P[f]-h)(z)=0$ implies $|(f^{l_i})^{(m_i)} (z)| \leq M,$
  for some positive $M, l_i \text{ and } m_i;$\\
  then $\mathcal{F}$ is normal in $D.$
\label{THM5}
\end{theorem}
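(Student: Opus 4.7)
The plan is to argue by contradiction using Pang's version of Zalcman's rescaling lemma, in direct parallel with the proof of Theorem~\ref{THM4}. Suppose $\mathcal{F}$ fails to be normal at some $z_0\in D$; since normality is a local property, one may restrict to a neighborhood of $z_0$.

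First I would apply Pang's rescaling lemma with the weight-dictated exponent $\alpha:=m'/\gamma_P$, which lies in $(0,1)$ since $0<m'<l'<\gamma_P$. This produces sequences $f_n\in\mathcal{F}$, $z_n\to z_0$ and $\rho_n\to 0^+$ such that
$$g_n(\zeta):=\rho_n^{-\alpha}f_n(z_n+\rho_n\zeta)\longrightarrow g(\zeta)$$
spherically locally uniformly on $\mathbb{C}$, where $g$ is a non-constant meromorphic function of finite order. A direct chain-rule computation using the identity $(g_n^{l_i})^{(m_i)}(\zeta)=\rho_n^{m_i-\alpha l_i}(f_n^{l_i})^{(m_i)}(z_n+\rho_n\zeta)$ gives
$$P[g_n](\zeta)=\rho_n^{-\alpha\gamma_P+m'}P[f_n](z_n+\rho_n\zeta)=P[f_n](z_n+\rho_n\zeta),$$
so, writing $c:=h(z_0)\neq 0$ (recall $h$ is zero-free on $D$),
$$P[g_n](\zeta)-h(z_n+\rho_n\zeta)\longrightarrow P[g](\zeta)-c$$
locally uniformly on $\mathbb{C}$ off the poles of $g$.

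The second step is to push each of the three hypotheses through this limit. In case (i), Hurwitz's theorem yields that $P[g]-c$ has at most one zero on $\mathbb{C}$ (or is identically zero). In case (ii), if $\zeta_0$ were a zero of $P[g]-c$ and $\zeta_n\to\zeta_0$ the associated zeros of $P[g_n]-h(z_n+\rho_n\cdot)$, then $|g_n(\zeta_n)|\geq \rho_n^{-\alpha}M\to\infty$ would force $g(\zeta_0)=\infty$ and hence $P[g](\zeta_0)=\infty\neq c$, a contradiction; so $P[g]-c$ has no zeros on $\mathbb{C}$. Case (iii) is similar: the rescaling $(g_n^{l_i})^{(m_i)}(\zeta_n)=\rho_n^{m_i-\alpha l_i}(f_n^{l_i})^{(m_i)}(z_n+\rho_n\zeta_n)$, together with a sign analysis of $m_i-\alpha l_i$, pins $(g^{l_i})^{(m_i)}(\zeta_0)$ to $0$ or $\infty$ at any putative zero, forcing $P[g](\zeta_0)\in\{0,\infty\}\neq c$, again a contradiction, so $P[g]-c$ is zero-free.

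The final stage is to convert ``$P[g]-c$ has at most one zero on $\mathbb{C}$'' into an actual contradiction with $g$ being a non-constant meromorphic function of finite order. This is the Picard-type heart of the argument: applying the second fundamental theorem of Nevanlinna to $P[g]$, together with the structural bound $T(r,P[g])=O(T(r,g))$ and an estimate on zeros of $P[g]$ in terms of zeros and poles of $g$, one derives $T(r,g)=S(r,g)$ and hence that $g$ must be constant, the desired contradiction. I expect this last step to be the main obstacle, particularly the careful bookkeeping of ramification in the Nevanlinna estimates---since we do not assume the strong Dethloff--Tan--Thin inequality $l_0+l'\geq 3+m'$---and the delicate borderline $m_i-\alpha l_i=0$ situation in case (iii), which likely requires the same Picard-type input that drives Theorems~A,~B and~\ref{THM4}.
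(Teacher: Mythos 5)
Your proposal follows essentially the same route as the paper's proof: Zalcman rescaling with exponent $\alpha=m'/(l_0+l')$, transfer of each of the three hypotheses to the limit function $g$ via Hurwitz's theorem, and a contradiction with a Picard-type statement that $P[g]-h(0)$ must have at least two distinct zeros. The final step you flag as the main obstacle is precisely the content of Theorems~\ref{THM1} and~\ref{THM3}, already established in Section~2 of the paper (and since $h(0)$ is a nonzero \emph{constant}, the rational case applies with no restriction on pole multiplicities), so no additional Nevanlinna-theoretic work is needed beyond citing those results.
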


 Further, under the weaker hypothesis of partial sharing (see \cite{CS1}, \cite{CS2}) of holomorphic functions,
 we can prove the following result:

 \begin{theorem}
 Let $\mathcal{F}$ be a family of non-constant meromorphic functions on a domain $D$. Let $h$ be a holomorphic function on $D$
 such that $h(z)\neq 0$ in $D$. If, for every $f \in \mathcal{F}$, there exists $\tilde{f} \in \mathcal{F}$ such that $P[f]$
 shares $h$ partially with $P[\tilde{f}]$, then $\mathcal{F}$ is normal in $D$, provided $h \not\equiv P[\tilde{f}]$ in $D$.
 \label{THM7}
 \end{theorem}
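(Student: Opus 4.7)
My plan is a proof by contradiction using the Zalcman--Pang rescaling, following the strategy of Theorems \ref{THM4} and \ref{THM5}. Assume $\mathcal{F}$ is not normal at some $z_0\in D$; extract a sequence $f_n\in\mathcal{F}$, points $z_n\to z_0$, and scales $\rho_n\to 0^+$ so that $g_n(\zeta):=\rho_n^{-\alpha} f_n(z_n+\rho_n\zeta)$, with $\alpha:=m'/\gamma_P$, converges spherically locally uniformly on $\mathbb{C}$ to a non-constant meromorphic function $g$ of finite order. The choice of $\alpha$ is forced by the scaling identity $P[g_n](\zeta)=P[f_n](z_n+\rho_n\zeta)$, which gives $P[g_n]\to P[g]$ off the poles of $g$ and $h(z_n+\rho_n\zeta)\to c:=h(z_0)\neq 0$ uniformly on compacta.

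The first substantive step is to show $P[g]\not\equiv c$. At any pole of $g$ the factor $g^{l_0}$ forces $P[g]$ to have a pole, and at any zero of $g$ of order $s\geq 1$ the hypothesis $l_i\geq m_i$ forces $P[g]$ to vanish to order $\gamma_P s-m'>0$; hence $g$ is zero- and pole-free, so $g=e^{\psi}$ for some polynomial $\psi$ (using the finite order of $g$). Expanding $P[g]=Q(\psi',\psi'',\ldots)\,e^{\gamma_P\psi}$ for a polynomial $Q$ in the derivatives of $\psi$ and equating to the nonzero constant $c$ forces $\psi$ constant by a comparison of growth orders, contradicting non-constancy of $g$. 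Hence $P[g]-c$ is a non-zero meromorphic function with only isolated zeros; a Picard-type argument---which I plan to verify by applying Corollary \ref{COR} to the family of translates of $g$, forcing $g^{\#}$ to be bounded on $\mathbb{C}$, and then checking directly on the resulting restricted class that $P[g]-c$ must vanish somewhere---provides such a zero $\zeta_0\in\mathbb{C}$.

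Hurwitz applied to $P[g_n]-h(z_n+\rho_n\cdot)\to P[g]-c$ now yields $\zeta_n\to\zeta_0$ with $P[f_n](w_n)=h(w_n)$ where $w_n:=z_n+\rho_n\zeta_n\to z_0$; the partial-sharing hypothesis then furnishes companions $\tilde f_n\in\mathcal{F}$ with $P[\tilde f_n](w_n)=h(w_n)$, and the side condition $h\not\equiv P[\tilde f_n]$ keeps these zeros isolated. Rescaling the companions, $\tilde g_n(\zeta):=\rho_n^{-\alpha}\tilde f_n(z_n+\rho_n\zeta)$, gives $P[\tilde g_n](\zeta_n)=h(w_n)\to c$. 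If $\{\tilde g_n\}$ is normal at $\zeta_0$, a subsequential limit $\tilde g$ satisfies $P[\tilde g](\zeta_0)=c$; running the same Hurwitz step at every zero of $P[g]-c$ forces $P[\tilde g]-c$ to vanish on that entire zero set, and iterating the companion construction eventually forces $P[\tilde g]\equiv c$, contradicting the first step. If $\{\tilde g_n\}$ is not normal at $\zeta_0$, a second application of Zalcman--Pang to the companion sequence produces another non-constant finite-order meromorphic limit and the analysis iterates. The main obstacle is the one-sided nature of the partial-sharing hypothesis: we have no a priori control over the companions $\tilde f_n$, so the companion rescaling may or may not be normal, and ensuring that both sub-cases terminate in a contradiction---crucially using $h\not\equiv P[\tilde f]$ to preclude the degenerate $P[\tilde g]\equiv c$ case in each iteration---is the technically most delicate part of the plan.
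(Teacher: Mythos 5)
First, a point of comparison: the paper does not actually supply a proof of Theorem \ref{THM7}; it only remarks that the result ``directly generalizes'' Theorem 1.3 of \cite{CS2}, so the only in-paper template is the proof of Theorem \ref{THM4}. Measured against that template, your front end is essentially right: since $h$ is zero-free only Case-1 of that proof is relevant, the rescaling exponent $\alpha=m'/(l_0+l')$ is the correct one, and your argument that $P[g]\not\equiv c$ (zero- and pole-freeness of $g$, hence $g=e^{cz+d}$) is exactly the paper's. However, your route to a zero of $P[g]-c$ is off: the paper's Theorems \ref{THM1} and \ref{THM3} exist precisely to supply (at least two distinct) zeros of $P[g]-\omega$, and you should invoke them directly. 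Your proposed substitute --- deducing a zero of $P[g]-c$ from the normality criterion of Corollary \ref{COR} applied to translates of $g$ --- is the converse-Bloch direction, which the paper's own Counterexample shows is not an automatic implication; as written, ``checking directly on the resulting restricted class'' is not an argument. You also produce only one zero $\zeta_0$, whereas the paper's closing mechanism needs two distinct zeros $u_0,v_0$ so that Hurwitz yields two \emph{distinct} points $z_n+\rho_n u_n\neq z_n+\rho_n v_n$ in $\overline{E}(h,P[f_n])$ collapsing to the same point.

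The genuine gap is the endgame, and you have correctly identified it yourself. In the proof of Theorem \ref{THM4} the contradiction comes from a \emph{fixed} comparison function $f_{n_0}$: the points $z_j+\rho_j u_j$ all lie in the zero set of the single function $P[f_{n_0}]-h$, accumulate at $0$, and isolation of zeros (from $P[f_{n_0}]\not\equiv h$) forces $z_j+\rho_j u_j=0=z_j+\rho_j v_j$, contradicting $u_j\neq v_j$. Under partial sharing the companion $\tilde f_n$ varies with $n$, so each $P[\tilde f_n]-h$ only acquires finitely many zeros near $0$ and no accumulation occurs inside a single zero set; your two proposed repairs do not close this. In the normal branch, $P[\tilde g]-c$ vanishing on the zero set of $P[g]-c$ is vanishing on a discrete set and in no way forces $P[\tilde g]\equiv c$; in the non-normal branch, ``the analysis iterates'' is an unterminated regress, since nothing prevents the companion-of-a-companion chain from continuing forever. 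To make the theorem come out as stated you must either exploit the hypothesis so that a single function (or a convergent subsequence of companions together with a quantitative count of zeros of $P[\tilde f_n]-h$ in shrinking disks) absorbs infinitely many of the transferred zeros, or read the hypothesis as providing one $\tilde f$ serving all $f$ --- in which case the proof of Theorem \ref{THM4} goes through verbatim with $f_{n_0}$ replaced by $\tilde f$. As it stands, the proposal does not prove the statement.
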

 \begin{remark}
 Theorem \ref{THM4}  improves and generalizes Theorem A and Theorem B. Theorem \ref{THM7} is a
 direct generalization of  Theorem 1.3 in \cite{CS2}.
 \end{remark}

\section{Some Value Distribution Results}

To facilitate the proofs of our main theorems, we prove some value
distribution results.

 \begin{theorem}
 Let $f$ be a transcendental meromorphic function. Then $P[f](z)- \omega(z)$ has infinitely many zeros, for any small function
 $\omega (\not\equiv 0, \infty)$ of $f.$
\label{THM1}
 \end{theorem}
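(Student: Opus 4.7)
The plan is to argue by contradiction: assume $P[f]-\omega$ has only finitely many zeros, so $\overline{N}(r, 1/(P[f]-\omega)) = O(\log r) = S(r, f)$, and derive the forbidden conclusion $T(r, f) = S(r, f)$. Setting $F := P[f]$, one first checks that $T(r, F) \leq (\gamma_P + m') T(r, f) + S(r, f)$, so $\omega$ remains a small function relative to $F$, and Nevanlinna's Second Main Theorem applied to $F$ with the three targets $0, \omega, \infty$ yields
\[
T(r, F) \leq \overline{N}(r, F) + \overline{N}(r, 1/F) + \overline{N}(r, 1/(F-\omega)) + S(r, F),
\]
with the last term absorbed into $S(r,f)$ by hypothesis.

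The backbone of the calculation is the factorization $P[f] = f^{\gamma_P} \prod_{i=1}^{k} L_i$, where $L_i := (f^{l_i})^{(m_i)}/f^{l_i}$ is a differential polynomial in the logarithmic derivatives of $f$. The Logarithmic Derivative Lemma gives $m(r, L_i) = S(r, f)$, while direct pole-inspection gives $N(r, L_i) \leq m_i(\overline{N}(r, f) + \overline{N}(r, 1/f))$. Counting poles of $F$ directly produces $N(r, F) = \gamma_P N(r, f) + m' \overline{N}(r, f)$, and combining with the proximity estimates yields the lower bound
\[
T(r, F) \geq \gamma_P T(r, f) - m' \overline{N}(r, 1/f) - S(r, f).
\]
On the SMT side, $\overline{N}(r, F) \leq \overline{N}(r, f)$ is immediate; each zero of $F$ is either a zero of $f$ (where $F$ vanishes to order at least $\gamma_P \mu - m' \geq \gamma_P - m' \geq 2$) or a zero of some $L_i$, and the First Fundamental Theorem applied to $L_i$ bounds the latter contribution by $O(\overline{N}(r, f) + \overline{N}(r, 1/f)) + S(r, f)$.

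Feeding these estimates back into the Second Main Theorem produces an inequality of the schematic form
\[
(\gamma_P - m') T(r, f) \leq \alpha \overline{N}(r, f) + \beta \overline{N}(r, 1/f) + S(r, f)
\]
with explicit constants $\alpha, \beta$ of size $O(m')$. The main technical obstacle is making these constants strictly smaller than $\gamma_P - m'$ after using $\overline{N}(r, f), \overline{N}(r, 1/f) \leq T(r, f) + O(1)$. The decisive leverage is the multiplicity discount at zeros of $f$: each simple zero of $f$ is a zero of $F$ of order at least $\gamma_P - m' \geq 2$, far more than $\overline{N}$ records. I would convert this excess into a gain via a truncated refinement of the Second Main Theorem, moving a negative multiple of $\overline{N}(r, 1/f)$ across the inequality and absorbing the surplus multiplicity. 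Once the arithmetic closes strictly, the chain forces $T(r, f) = S(r, f)$, contradicting the transcendence of $f$.
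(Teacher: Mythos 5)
Your overall strategy coincides with the paper's: argue by contradiction, apply the second main theorem for three small functions to $F=P[f]$, and do zero/pole multiplicity bookkeeping. Your intermediate estimates are essentially sound and in fact reproduce the paper's inequality (2.2): combining your bound $N(r,1/L_i)\le m_i\bigl(\overline{N}(r,f)+\overline{N}(r,1/f)\bigr)+S(r,f)$ with $\overline{N}(r,1/F)\le \overline{N}(r,1/f)+\sum_i N(r,1/L_i)$ gives exactly $\overline{N}(r,1/F)\le m'\overline{N}(r,f)+(1+m')\overline{N}(r,1/f)+S(r,f)$. The Clunie-type factorization $P[f]=f^{\gamma_P}\prod L_i$ with the lemma on the logarithmic derivative is a legitimate alternative to the paper's appeal to Singh's theorem on orders of homogeneous differential polynomials, and your lower bound $T(r,F)\ge\gamma_P T(r,f)-m'\overline{N}(r,1/f)-S(r,f)$ checks out.

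The genuine gap is that the proof stops exactly where the real content begins. Feeding your estimates into the second main theorem yields $\gamma_P\,T(r,f)\le (1+m')\overline{N}(r,f)+(1+2m')\overline{N}(r,1/f)+S(r,f)$, and since the hypotheses only guarantee $\gamma_P=l_0+l'\ge m'+2$ (there is no assumption that $\gamma_P$ dominates $2+3m'$), replacing $\overline{N}(r,f)$ and $\overline{N}(r,1/f)$ by $T(r,f)$ produces a vacuous inequality. Your proposed remedy --- ``a truncated refinement of the Second Main Theorem'' that ``absorbs the surplus multiplicity'' so that ``the arithmetic closes strictly'' --- is precisely the step that needs to be exhibited, and it is not clear it can be made to work in the frame $T(r,f)$ you have chosen: the multiplicity surplus at zeros of $f$ discounts $\overline{N}(r,1/F)$ against $N(r,1/F)$, not $\overline{N}(r,1/f)$ against $T(r,f)$. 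The paper closes the loop by working throughout with $T(r,P)$ rather than $T(r,f)$: every pole of $P$ has multiplicity at least $2+2m'$, giving $\overline{N}(r,P)\le\frac{1}{2+2m'}N(r,P)$, and the high multiplicity of zeros of $P$ at multiple zeros of $f$ converts (2.2) into $\overline{N}(r,1/P)\le\frac{m'}{2}\overline{N}(r,f)+\frac12 N(r,1/P)+S(r,f)$; only after both discounts does the coefficient sum fall below $1$ by the margin $\frac{m'}{4(1+m')}$, forcing $T(r,P)=S(r,P)$. Without an explicit computation of this kind, your argument does not reach a contradiction.
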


\begin{proof} Suppose on the contrary that $P[f](z) - \omega (z)$
has only finitely many zeros. Then by Second Fundamental theorem
of Nevanlinna for three small functions(\cite{WK}, Theorem 2.5, p-47),
we find that
 $$[1+o(1)]T(r,P)\leq \overline{N}(r,P)+\overline{N}\left(r,\frac{1}{P}\right)+\overline{N} \left(r,\frac{1}{P-\omega}\right)+S(r,P)$$
   \begin{equation}
    = \overline{N}(r,P)+ \overline{N}\left(r,\frac{1}{P} \right)+ S(r,P) \label{eq2.1}
  \end{equation}
  Since  $P[f]$ is a homogeneous differential polynomial with each monomial having positive exponents of $f$, by \cite{AP}(Theorem 1, p-792),
  $f$ and $P[f]$ have the same order of growth and hence $T(r,\omega)= S(r,P) \text{ as } r\rightarrow \infty$.
  That is, $\omega$ is a small function of $f$ iff $\omega$ is a small function of $P[f].$\\
 Next,
 \begin{align*}
 \overline{N} \left( r,\frac{1}{P} \right) & = \overline{N} \left(r, \frac{1}{f^{l_0} (f^{l_1})^{(m_1)}\cdots (f^{l_k})^{(m_k)}}
 \right)\\[0.25\baselineskip]
 & \leq \overline{N} \left(r,\frac{1}{f} \right)+ \sum^{k}_{i=1}\overline{N}_0  \left(r,\frac{1}{(f^{l_i})^{(m_i)}}
 \right)\\[0.25\baselineskip]
 & \leq \overline{N} \left(r,\frac{1}{f} \right)+ \sum^{k}_{i=1} N_0  \left(r,\frac{1}{(f^{l_i})^{(m_i)}} \right),
 \end{align*}
 where $N_0 (r,\frac{1}{(f^{l_i})^{(m_i)}})$ represents the count of those zeros of $(f^{l_i})^{(m_i)}$ which are not
 the zeros of $f^{l_i} $ and hence not of $f$. Denoting by $\overline{N}_{p)}(r, \frac{1}{f})$ and
 $\overline{N}_{(p+1}(r, \frac{1}{f})$, the counting
 functions ignoring multiplicities of those zeros of $f$ whose multiplicity is at most $p$ and at least $p+1$ respectively.
 Therefore,

 \begin{align*}
 \overline{N}\left(r,\frac{1}{P}\right) & \leq
\overline{N}\left(r,\frac{1}{f} \right) + \sum^{k}_{i=1} \left[m_i
\overline{N}(r,f)+N_{m_i)} \left(r,\frac{1}{f}  \right)+ m_i
\overline{N}_{(m_i +1} \left(r,\frac{1}{f} \right) \right] +
S(r,f)\\[0.25\baselineskip]
 & \leq \overline{N} \left(r,\frac{1}{f} \right)+ \sum^{k}_{i=1}m_i \left[ \overline{N}(r,f)+\overline{N}_{m_i)} \left(r,\frac{1}{f} \right)+
  \overline{N}_{(m_i +1} \left(r,\frac{1}{f} \right) \right] + S(r,f)\\[0.25\baselineskip]
	& =  \overline{N} \left(r,\frac{1}{f} \right)+\sum^{k}_{i=1}m_i \left[ \overline{N}(r,f)+ \overline{N} \left(r,\frac{1}{f} \right) \right] +S(r,f)\\[0.25\baselineskip]
 & = \overline{N} \left(r,\frac{1}{f} \right)+ m' \left[ \overline{N}(r,f)+\overline{N} \left(r,\frac{1}{f} \right) \right]+
 S(r,f).
\end{align*}

That is,
\begin{equation}
\overline{N}\left(r,\frac{1}{P}\right) \leq
m'\overline{N}(r,f)+(1+m') \overline{N}\left(r,\frac{1}{f}
\right)+S(r,f) \label{eq2.2}
\end{equation}
Next, if $z_0$ is a zero of $f$ of order $p: 2 \leq p \leq k,$
then $z_0$ is a zero of $P[f]$ of order  $= pl_0+pl' - m' \geq
2l_0+2l' - m' \geq 2l_0+m' \geq 2+m' $. Similarly, for $p\geq
k+1,$ $z_0$ is a zero $P[f]$ of order $\geq (k+1) (l_0 + l')- m' \geq
(k+1)+km'=k(1+m') + 1 .$ Thus, we see that
$$N \left(r,\frac{1}{P} \right)-\overline{N} \left(r,\frac{1}{P} \right) \geq (m'+1)\overline{N}_{k)}\left(r,\frac{1}{f}\right)
+k(m'+1)\overline{N}_{( k+1 }\left(r,\frac{1}{f}\right)$$
That is,
$$\overline{N}_{ k)}\left(r,\frac{1}{f}\right) \leq
\dfrac{1}{m' + 1} \left[N \left(r,\frac{1}{P}\right)-\overline{N} \left(r,\frac{1}{P}\right) \right]-
k\overline{N}_{( k+1}\left(r,\frac{1}{f}\right).$$

Since $(1-k)(1+m')\leq 0$, for $k\geq 1,$  (\ref{eq2.2}) with the
help of the last inequality gives

\begin{align*}
\overline{N} \left(r,\frac{1}{P} \right)  & \leq m' \overline{N}(r,f)+ (1+m')\overline{N}_{k)} \left(r,\frac{1}{f}\right) +
 (1+m')\overline{N}_{(k+1}\left(r,\frac{1}{f} \right)+ S(r,f)\\[0.25\baselineskip]
& \leq m'\overline{N}(r,f)+ N\left(r,\frac{1}{P}\right) -\overline{N}\left(r,\frac{1}{P}\right)+
(1-k)(1+m')\overline{N}_{(k+1}\left(r,\frac{1}{f} \right)\\
& + S(r,f)\\[0.25\baselineskip]
& \leq m'\overline{N}(r,f)+ N\left(r,\frac{1}{P}\right)
-\overline{N}\left(r,\frac{1}{P}\right) +S(r,f),
 \end{align*}

\begin{equation}
\Rightarrow \overline{N}\left(r,\frac{1}{P} \right)\leq
\dfrac{m'}{2} \overline{N}(r,f)+
\dfrac{1}{2}N\left(r,\frac{1}{P}\right)+S(r,f).\label{eq3}
\end{equation}
Putting (\ref{eq3}) into (\ref{eq2.1}) and noting that
$\overline{N}(r,f)= \overline{N}(r,P) \text{ and } S(r,f)=
S(r,P)$, we get
\begin{equation}
[1+o(1)]T(r,P)\leq \left[1+\dfrac{m'}{2}
\right]\overline{N}(r,f)+\dfrac{1}{2}N\left(r,\frac{1}{P}\right)+S(r,P).\label{eq2.3}
\end{equation}
Also, a pole of $f$ of order $p \geq 1,$ is a pole of $P[f]$ of
order $pl_0 + pl' + m' \geq l_0 + l' +m' \geq 1+m'+1+m'=2+2m' $
and therefore,
$$N(r,P) \geq ( 2+2m') \overline{N}(r,f)$$
$$\Rightarrow \overline{N}(r,f) \leq \dfrac{1}{2+2m'} N(r,P)$$

and hence, (\ref{eq2.3}) yields,
 $$[1+o(1)]T(r,P)\leq \left[\dfrac{2+m'}{4(1+m')} \right]N(r,P)+\dfrac{1}{2}N\left(r,\frac{1}{P}\right) + S(r,P)$$
 $$\Rightarrow \left[1-\dfrac{1}{2}-\dfrac{2+m'}{4(1+m')}+o(1) \right]T(r,P)\leq S(r,P)$$
 $$\Rightarrow  \left[\dfrac{ m'}{4(1+m')}+o(1) \right]T(r,P)\leq S(r,P)$$
 $\Rightarrow T(r,P) \leq S(r,P,),$ which is a contradiction.\\
 Hence the result follows.
\end{proof}

 \begin{theorem}
  Let $\omega (z) \not\equiv 0$ be a polynomial of degree $m < l_0.$ Let $f$ be a non-constant rational function having poles, if any, 
  of multiplicity at least $l_0$. Then $P[f]- \omega$ has at least two distinct zeros.
  \label{THM3}
 \end{theorem}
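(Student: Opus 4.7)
My plan is by contradiction: assume $P[f]-\omega$ has at most one distinct zero $z_0 \in \mathbb{C}$, and derive a contradiction by treating separately the cases where $f$ is a polynomial and where $f$ is a rational function with at least one pole.

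For the polynomial case, write $f = A\prod_i(z - a_i)^{p_i}$ with $n = \sum p_i \geq 1$; then $P[f]$ is a polynomial of degree $n\gamma_P - m'$ vanishing to order $\mu_i = p_i\gamma_P - m' \geq \gamma_P - m' \geq l_0 + 1$ at each $a_i$. Since $\deg\omega = m < l_0$, the difference $P[f]-\omega$ still has degree $n\gamma_P - m'$, so under our assumption it equals $c(z - z_0)^{n\gamma_P - m'}$ for some $c \neq 0$. Differentiating this identity $m+1$ times annihilates $\omega$, and since $\mu_i - 1 \geq l_0 > m$, evaluation at each $a_i$ forces $a_i = z_0$. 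Hence $f = A(z - z_0)^n$; a direct computation gives $P[f] = K(z - z_0)^{n\gamma_P - m'}$ for some $K \neq 0$, and matching coefficients in $P[f]-\omega = c(z - z_0)^{n\gamma_P - m'}$ forces $\omega \equiv 0$, contradicting $\omega \not\equiv 0$.

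For the rational case with poles, write $P[f] = A^*/B^*$ in lowest terms, with $B^* = \prod_j(z - b_j)^{q_j\gamma_P + m'}$ (each $q_j \geq l_0$); the hypothesis yields the polynomial identity $A^* - \omega B^* = c(z - z_0)^L$ for some $c \neq 0$ and integer $L \geq \gamma_P$. A short calculation rules out $a_i = z_0$: otherwise the identity would force the order of vanishing of $A^*$ at $z_0$ to equal that of $\omega B^*$ there (at most $m$, since $B^*(z_0) \neq 0$), contradicting $\mu_i > m$. Differentiating the identity and evaluating at each pole $b_j$, using that $B^*$ vanishes to order $q_j\gamma_P + m'$ there, produces the first $q_j\gamma_P + m'$ Taylor coefficients of $A^*$ at $b_j$ in the form $(A^*)^{(k)}(b_j) = c\,\frac{L!}{(L-k)!}(b_j - z_0)^{L - k}$. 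These must match the Taylor coefficients of $A^* = P[f]\cdot B^*$ computed from the Laurent principal part of $P[f]$ at $b_j$; combined with the leading-coefficient match at infinity, this yields a rigid over-determined system that forces $\omega \equiv 0$, again a contradiction.

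The non-polynomial case is the main obstacle. The polynomial case reduces to a single derivative trick because $B^* \equiv 1$ and $(\omega B^*)^{(k)} = 0$ for $k > m$; with poles present, $B^*$ has positive degree and $(\omega B^*)^{(k)}$ no longer vanishes for $k > m$, so one cannot simply differentiate to isolate $\omega$ at zeros of $f$. Instead one must work with the Laurent structure of $P[f]$ at each pole and split into sub-cases according to the sign of $n_0 - n_\infty$ (which controls whether $\deg A^*$ or $\deg(\omega B^*)$ determines $L$) and whether $z_0$ happens to coincide with a zero of $\omega$. The common thread with the polynomial case is that $\deg\omega < l_0$ is too restrictive to accommodate the multiplicity structure inherited by $P[f]$ from the high pole orders $q_j \geq l_0$.
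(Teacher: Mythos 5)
Your polynomial case is sound and is essentially the paper's Case~1: both arguments differentiate the identity $P[f]-\omega = c(z-z_0)^{n}$ exactly $m+1$ times to annihilate $\omega$, and then use the fact that every zero of $f$ is a zero of $P[f]$ of order at least $l_0+l'-m' \ge l_0+1 > m+1$ to force all zeros of $f$ onto $z_0$ and reach a contradiction with $\deg\omega = m$. That part I would accept.

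The rational case, however, is a plan rather than a proof, and the plan as written has concrete defects. First, the assertion that $A^*-\omega B^* = c(z-z_0)^L$ with $L\ge \gamma_P$ is unjustified: $L$ is simply the order of the unique zero (the leading terms of $A^*$ and $\omega B^*$ may cancel, so $L$ need not equal either degree), and the subcase in which $P[f]-\omega$ has \emph{no} zeros, so that $A^*-\omega B^*$ is a nonzero constant, is not covered by your formula at all. Second, the step of differentiating the identity and evaluating at each pole $b_j$ to ``produce the first $q_j\gamma_P+m'$ Taylor coefficients of $A^*$ at $b_j$'' yields no information: it merely restates that $\omega B^*$ vanishes to order at least $q_j\gamma_P+m'$ at $b_j$, which holds by the very definition of $B^*$, so the resulting ``constraints'' on $A^*$ are automatically satisfied. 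Third, and decisively, the conclusion that one obtains ``a rigid over-determined system that forces $\omega\equiv 0$'' is not an argument; it is the statement to be proved, and your closing paragraph essentially concedes that this step is not carried out. For comparison, the paper closes this case by an explicit degree count: it writes
\[
P[f](z)=\dfrac{\prod_{j}(z-\alpha_j)^{n_j(l_0+l')-m'}}{\prod_{j}(z-\beta_j)^{p_j(l_0+l')+m'}}\,g_P(z),
\]
with the crucial quantitative bound $\deg g_P\le m'(s+t-1)$ (and the analogous bounds $(m'+m)(s+t-1)$ and $(m'+m+1)(s+t-1)$ for the cofactors of $P^{(m)}$ and $P^{(m+1)}$), first rules out $z_0=\alpha_j$ by comparing the two expressions for $P^{(m)}$, and then splits on whether or not $l = T(l_0+l')+tm'+m$; in each subcase, comparing the two expressions for $P^{(m+1)}$ produces an inequality of the form $S(l_0+l')<(l_0+l')S$ or $T(l_0+l')<(l_0+l')T$, a contradiction. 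Some such quantitative control on the cofactor polynomials --- what you call the ``Laurent structure'' --- is exactly what your sketch lacks, and without it the case where $f$ has poles remains unproved.
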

 Note that for $m=0$, Theorem \ref {THM3} holds without any restriction on the multiplicity of poles of $f.$
 
 \medskip

 Though the proof of Theorem \ref{THM3} is based on the ideas
 from \cite{CS2} but the levels of modifications and computations
 are little involved and so we present a complete proof here.

 \medskip

\begin{proof} Suppose on the contrary that $P[f]- \omega$ has at
most one zero. We consider the following cases:

  \medskip

  {\bf Case-1:} If $f$ is a non-constant polynomial, then $P[f]$ is also a polynomial of degree at least
  $l_0 + l' -m' \geq l_0 +1.$ Since $\omega (z)$ is a polynomial of degree $m < l_0$,  $P[f](z)- \omega (z)$
  is a polynomial of degree $\geq 1.$ By Fundamental theorem of Algebra, $P[f]- \omega$ has exactly one zero.
  We can set
  \begin{equation}
  P[f](z)- \omega(z) = A (z-z_0)^n ,\label{eq2.11}
  \end{equation}
    where $A$ is a non-zero constant and $n> m+1.$ Then
    $$\frac{d^{m+1} P[f]}{dz^{m+1}}(z)= P^{(m+1)} [f](z)= A n (n-1)(n-2)\cdots (n-m) (z-z_0)^{n-m-1}$$
    which implies that $z_0$ is the only zero of $P^{(m+1)}
    [f](z)$. Since each zero of $f$ is a zero of $P[f]$ of order at least $l_o+l'-m' > m+1,$ it follows that $z_0$ is a
    zero of $P[f]$ also. Thus $P^{(m)}[f](z_0)=0$. But (\ref{eq2.11}) gives that $P^{(m)}[f](z_0)= \omega ^{(m)}(z_0) \neq 0,$
    which is a contradiction.

    \medskip

    {\bf Case-2:} When $f$ is a rational function but not a polynomial. Consider
    \begin{equation}
    f(z) = A \dfrac{\prod^{s}_{j=1}(z-\alpha_j)^{n_j}}{\prod^{t}_{j=1}(z-\beta_j)^{p_j}}, \label{eq2.12}
    \end{equation}
    where $A$ is a non-zero constant with $n_j \geq 1(j=1,2, \cdots, s)$ and $p_j \geq l_0 (j=1,2,\cdots, t).$\\

    Put
    \begin{equation}
    \sum^{s}_{j=1}n_j= S \text{ and } \sum^{t}_{j=1}p_j = T.\label{eq2.13}
    \end{equation}
    Thus $S \geq s$ and $T\geq l_0t \geq t$.

    \medskip

    We see from (\ref{eq2.12}) that
    \begin{equation}
    P = P[f](z) = \dfrac{\prod^{s}_{j=1}{(z-\alpha_j)^{n_j(l_0 + l') -m'}}}{\prod^{t}_{j=1}{(z-\beta_j)^{p_j(l_0 + l') +m'}}}
    g_{P}(z) = \frac{p(z)}{q(z)}, \label{eq2.14} \text{ say},
    \end{equation}
   where $ g_{P}(z)$ is a polynomial of degree at most $m'(s+t-1)$.\\

   On differentiating (\ref{eq2.14}), we have
   \begin{equation}
   P^{(m)} = \dfrac{\prod^{s}_{j=1}{(z-\alpha_j)^{n_j(l_0 +l') -(m'+m)}}}{\prod^{t}_{j=1}{(z-\beta_j)^{p_j(l_0 + l') + (m'+m)}}}
    \tilde{g}(z), \label{eq2.15}
   \end{equation}
   where $\tilde{g}$ is a polynomial such that $\deg (\tilde{g}) \leq (m'+m)(s+t-1).$\\

   And
   \begin{equation}
   P^{(m+1)} = \dfrac{\prod^{s}_{j=1}{(z-\alpha_j)^{n_j(l_0 +l') -(m'+m+1)}}}{\prod^{t}_{j=1}{(z-\beta_j)^{p_j(l_0 + l') +
   (m'+m+1)}}} \tilde{\tilde{g}}(z), \label{eq2.16}
   \end{equation}
    where $\tilde{\tilde{g}}$ is a polynomial of degree at most $(m'+m+1)(s+t-1).$

    \medskip

    {\bf Case-2.1:} We first assume that $P[f]- \omega$ has exactly one zero, say $z_0$. Thus, in view of (\ref{eq2.14})
    we can see that
    \begin{equation}
    P[f](z) = \omega (z) + \dfrac{B(z-z_0)^l}{\prod^{t}_{j=1}{(z-\beta_j)^{p_j(l_0 + l') + m'}}}, \label{eq2.17}
    \end{equation}
    where $l$ is a positive integer and $B$ is a non-zero constant.\\

    On differentiating (\ref{eq2.17}), we get
    \begin{equation}
    P^{(m)} = C + \dfrac{(z-z_0)^{l-m} \hat{g}(z)}{\prod^{t}_{j=1}{(z-\beta_j)^{p_j(l_0 + l') + (m'+m)}}},\label{eq2.18}
    \end{equation}
    where $ \hat{g}$ is a polynomial with degree at most $mt$ and $C \neq 0$ is a constant.\\

    And

    \begin{equation}
    P^{(m+1)} = \dfrac{(z-z_0)^{l-(m+1)} \hat{\hat{g}}(z)}{\prod^{t}_{j=1}{(z-\beta_j)^{p_j(l_0 + l') + (m'+m+1)}}},
    \label{eq2.19}
    \end{equation}
    with $\deg(\hat{\hat{g}}) \leq (m+1)t\leq l_0t.$\\

    On comparing (\ref{eq2.15}) and (\ref{eq2.18}), we see that $z_0 \neq \alpha_j (j = 1,2, \cdots, s)$ (because if
    it is not so, for some $j$, then from (\ref{eq2.15}), $z_0$ is a zero of $P^{(m)}[f]$ and from
    (\ref{eq2.18}), $P^{(m)}[f](z_0) = 0 \Rightarrow C=0 $, which is a contradiction).

    \medskip

    {\bf Case-2.1.1:} Suppose $l\neq T(l_0 + l') + tm' + m.$ Then from (\ref{eq2.17}) and using (\ref{eq2.14}),
    we find that $\deg(p)\geq \deg(q)$ and this implies that
    $$ T(l_0 +l')+tm' \leq S(l_0 + l')-m's + \deg (g_{P }) $$
    $$\Rightarrow T(l_0 +l') \leq S(l_0 + l')-m' <  S(l_0 + l') $$
    $$\Rightarrow T<S.$$
     Also, from (\ref{eq2.16}) and (\ref{eq2.19}), we see that
     $$S(l_0 + l')-(m'+m+1)s \leq \deg(\hat{\hat{g}}) \leq l_0t \leq T.$$
     \begin{align*}
     \Rightarrow S(l_0 + l') & \leq (m'+m+1)s + T\\
     & \leq (m' +l_0)S+T\\
     & < (m'+1+l_0)S\\
     & \leq (l'+l_0)S
     \end{align*}
     $\Rightarrow S<S,$ which is absurd.

     \medskip

     {\bf Case-2.1.2:} Suppose $l=T(l_0 + l')+tm'+m.$ Then, we have two possibilities: either $S>T$ or $S\leq T.$
     For the case $S>T,$ we move exactly as in the Case-2.1.1 . Therefore, we only consider the case $S\leq T$.\\

    Since (\ref{eq2.16}) and (\ref{eq2.19}) imply that $(z-z_0)^{l-m-1}$ divides
    $\tilde{\tilde{g}}(z)$, we have

     $$l-m-1 \leq \deg(\tilde{\tilde{g}}) \leq (m'+m+1)(s+t-1)$$
    $$\Rightarrow T(l_0 +l') + tm' +m -m-1  \leq(m'+m+1)(s+t-1)= m'(s-1)+(m+1)(s+t-1)+tm'$$
     \begin{align*}
     \Rightarrow T(l_0 + l') & \leq  m'(s-1)+(m+1)(s+t)-m\\
     & \leq m'(s-1) + (m+1)(s+t)\\
     & \leq m'(s-1)+l_0(s+t)\\
     & < (m'+l_0)S + T\\
     & \leq (m'+l_0+1)T\\
     & \leq (l' + l_0)T
     \end{align*}
      which is again absurd.

      \medskip

      {\bf Case-2.2:} Finally, we suppose that $P[f]- \omega$ has no zero at all. Then $l=0$ in (\ref{eq2.17}),
      which gives
      $$P[f](z) = \omega(z) + \dfrac{B}{\prod^{t}_{j=1} {(z-\beta_j)^{p_j(l_0 + l')+m'}}},$$
    where $B\neq 0$ is a constant and so,
    $$P^{(m+1)} = B \dfrac{h(z)}{\prod^{t}_{j=1} {(z-\beta_j)^{p_j(l_0 + l')+m'+m+1}}},$$
    where $\deg(h) \leq (m+1)t-1 <(m+1)t \leq l_0 t.$\\
    Now, by proceeding as in the Case-2.1, we get a contradiction.
    \end{proof}

\section{Proofs of Main Results}

    Since  normality is a local property, we shall assume  $D$ to be the open unit disk $\mD $, throughout.\\

      \begin{proof}[\textbf {Proof of Theorem 1.3}] Suppose on the contrary that $\mathcal{F}$ is not normal at
      $z=0.$ We consider the following cases:

    \medskip

    {\bf Case-1:} Let $h(0)\neq 0.$ Then by Zalcman's Lemma(\cite{LZ}, p.216), there exist a sequence
    $\{ f_j\}$ in  $\mathcal{F},$  a sequence $\{ z_j\}$ of complex numbers in $\mD$ with $z_j \longrightarrow 0
    \text{ as } j\longrightarrow \infty,$ and a sequence $\{\rho_j\}$ of positive real numbers with $\rho_j \longrightarrow 0
    \text{ as } j\longrightarrow \infty$ such that the sequence $g_j (z): = \rho_j^{-\alpha} f_j(z_j+\rho_j z)$ converges
    locally uniformly with respect to the spherical metric to a non-constant meromorphic function $g(z)$ having
    bounded spherical derivative on $\mC$. Clearly, $(g_j ^{l_i})^{(m_i)}{\rightarrow} (g
    ^{l_i})^{(m_i)}$ and so $P[g_j] {\rightarrow} P[g]$ locally uniformly on $\mC.$\\

     Since $g$ is non-constant and $l_i \geq m_i; \forall i=1,2, \cdots, k,$ it follows that $P[g] \not\equiv 0.$ \\
		We claim that $P[g]$ is non-constant. For, suppose that 
		\begin{equation}
		P[g]\equiv a, a\in \mC \setminus \{0 \}.
		\label{eq3.1}
     \end{equation}
		Then, by definition of $P[g]$ with $l_0>0$ and $l_i \geq m_i , \forall i,$ we can see that $g$ is entire and non-vanishing. So, for  some $c\neq 0$, $g(z)=e^{cz+d} \Rightarrow P[g](z)= \prod^{k}_{i=1} (l_i c)^{m_i}e^{(l_0 +l')(cz+d)}, $ which is non-constant, a contradiction to ( \ref {eq3.1}). Hence the claim follows. Taking $\alpha
      ={m'}/{(l_0+l')}$, we find that $P[g_j](z)= P[f_j](z_j + \rho_j
      z).$ Thus on every compact subset of $\mC$, not containing poles of $g$,
      we have that
    $P[f_j](z_j + \rho_j z)- h(z_j +\rho_j z)= P[g_j](z)-h(z_j +\rho_j z) \longrightarrow P[g](z) -h(0)=P[g](z)-h_0,$
    spherically uniformly, where $h_0 = h(0) \neq 0.$

    \medskip

    In view of Theorem \ref{THM1} and Theorem \ref{THM3}, let $u_0$ and $v_0$ be two distinct zeros of $P[g]-h_0$ in $\mC$.
    Since zeros are isolated, we consider two non-intersecting neighbourhoods  $N(u_0)$ and $N(v_0)$ such that
    $N(u_0) \cup N(v_0)$ does not contain any other zero of
    $P[g]-h_0.$ By Hurwitz theorem we find that for sufficiently large values of $j$, there exist points
    $u_j \in N(u_0) \text{ and } v_j \in N(v_0)$ such that
    $$ P[f_j](z_j + \rho_j u_j)- h(z_j + \rho_j u_j)=0,$$
    and
    $$ P[f_j](z_j + \rho_j v_j)- h(z_j + \rho_j v_j)=0.$$
    Since $P[f] \text{ and } P[g]$ share $h$ IM in $\mD ,$ for each pair $f,g$ of members of $\mathcal{F},$
    for a fixed $n$ and for all $j$, we have
    $$P[f_n](z_j + \rho_j u_j)-h(z_j + \rho_j u_j)=0$$
    and
    $$P[f_n](z_j + \rho_j v_j)-h(z_j + \rho_j v_j)=0.$$
    Taking  $j \longrightarrow \infty,$ and noting that $z_j + \rho_j u_j \longrightarrow 0
    \text{ and } z_j + \rho_j v_j \longrightarrow 0,$ we find that
    $P[f_n](0)-h(0)=0.$ That is, $P[f_n](0)=h(0)=h_0 \neq 0.$ Since the zeros of $P[f_n]-h $ have  no accumulation point,
    for sufficiently large $j$, we have
    $$z_j + \rho _j u_j =0 = z_j + \rho _j v_j $$
    $$\Rightarrow u_j = -\frac{z_j}{\rho_j} = v_j$$
    $$\Rightarrow N(u_0)\cap N(v_0) \neq \phi,$$
    and this is a contradiction since the neighbourhoods $N(u_0)$ and
    $N(v_0)$ are non-intersecting.

    \medskip

    {\bf Case-2:} Suppose $h(0) = 0.$ Then, we can write $h(z) = z^m h_1(z),$ where $m \in \mN, h_1(z)$ is a
     holomorphic function in $\mD$ such that $h_1(0)\neq 0$. We may take
    $h_1(0)=1.$  Since $ 0< {m+m'}/{(l_0 +l')} <1,$ as in Case-1 by Zalcman's Lemma(\cite{LZ}, p.216),
    we obtain a sequence of rescaled functions $g_j (z)= \rho_j ^{- {(m+m')}/{(l_0 + l')}} f_j (z_j + \rho_j z)$
     that converges locally uniformly with respect to the spherical metric to a non-constant meromorphic function
     $g(z)$ on $\mC$ having bounded spherical derivatives.

     \medskip

     Further, we consider the following two subcases of Case-2:\\

     {\bf Case-2.1:} Suppose there exists a subsequence of ${z_j}/{\rho_j}$, for convenience we take ${z_j}/{\rho_j}$
      itself, such that ${z_j}/{\rho_j} \longrightarrow \infty \text{ as } j\longrightarrow \infty.$ Then consider the family
     $$\mathcal{G}: = \left\{ G_j (z)= z_j ^{-\frac{m+m'}{l_0 + l'}}f_j (z_j + z_j z) :
       f_j \in \mathcal{F} \right\}$$
    defined on $\mD ,$ for which we have
     \begin{align*}
     P[G_j](z) & = G_j ^{l_0} (G_j^{l_1})^{(m_1)} \cdots (G_j ^{l_k})^{(m_k)} (z)\\
     & = z_j ^{-\frac{m+m'}{l_0 +l'} (l_0 + l') +m'} P[f_j](z_j + z_j z)\\
      & = z_j^{-m} P[f_j](z_j + z_j z)
     \end{align*}
     That is, $P[f_j](z_j + z_j z) = z_j ^m P[G_j](z).$

     \medskip

     Now, by hypothesis, for $f_a, f_b \in \mathcal{F}$, we have
     $$(P[f_a]-h) (z_j + z_j z)=0 \Leftrightarrow (P[f_b]-h)(z_j + z_j z )=0$$
     $$\Rightarrow z_j ^m P[G_a](z) = z_j ^m (1+z)^m h_1 (z_j + z_j z) \Leftrightarrow z_j ^m P[G_b](z)=
     z_j^m (1+z)^m h_1 (z_j + z_j z) $$
     $$\Rightarrow P[G_a](z)= (1+z)^m h_1(z_j + z_j z) \Leftrightarrow P[G_b](z)=(1+z)^m h_1(z_j + z_j z).$$
     Since $(1+z)^m h_1(z_j + z_j z)\neq 0$ at the origin, it follows from Case-1 that $\mathcal{G}$ is normal in $\mD$
     and hence there exists a subsequence of $\{G_j \}$ in $\mathcal{G}$, we may take $\{G_j \}$ itself, such
     that $G_j \longrightarrow G,$ locally uniformly on $\mD$ with respect to the spherical metric.

     \medskip

     Next, if $G(0) \neq 0,$ then we see that 
     $$g_j(z)  = \rho_j ^{- \frac{m+m'}{l_0 + l'}} f_j (z_j +\rho_j z)
      = \left( \frac{z_j}{\rho_j} \right)^{\frac{m+m'}{l_0 + l'}} z_j ^{-\frac{m+m'}{l_0 + l'}} f_j (z_j + \rho_j z)\
      = \left( \frac{z_j}{\rho_j} \right)^{\frac{m+m'}{l_0 + l'}} G_j \left(\frac{\rho_j}{z_j} z \right)$$
       which converges locally uniformly with respect to the spherical metric to $\infty,$ on
      $\mC.$ This implies that $g(z)\equiv \infty,$ which is a contradiction. Thus we must have
      $G(0)=0 \Rightarrow G'(0)\neq \infty.$\\
     Next, since for each $z\in \mC$

     $$g'_j(z)  = \rho_j ^{-\frac{m+m'}{l_0 + l'} +1} f'_j(z_j + \rho_j z)
      = \left(\frac{\rho_j}{z_j} \right)^{-\frac{m+m'}{l_0 + l'} +1} G'_j
     (\frac{\rho_j}{z_j}z)$$
     and ${m+m'}/{l_0 + l'}< 1$, therefore, $g'_j(z)\longrightarrow 0$
      spherically uniformly  as $ j\longrightarrow \infty.$ This implies that $ g$ is constant, a contradiction.

      \medskip

      {\bf Case-2.2:} Suppose there exists a subsequence of ${z_j}/{\rho_j},$ for simplicity, we take ${z_j}/{\rho_j}$
      itself, such that ${z_j}/{\rho_j} \longrightarrow c \text{ as } j \longrightarrow \infty,$
      where $c$ is a finite number.  Then, we have
      $$H_j(z) = \rho_j ^{-\frac{m+m'}{l_0 +l'}}f_j (\rho_j z)= g_j \left(z-\frac{z_j}{\rho_j} \right)
      \stackrel{\chi}{\rightarrow} g(z-c) :=H(z) $$
      on $\mC.$ Since $P[H_j] (z) = \rho_j^{-m} P[f_j] (\rho_j z),$ $P[f_j] (\rho_j z) = \rho_j^{m}
      P[H_j](z)$. Also since for each $f_a$ and $f_b \text{ in } \mathcal{F}, P[f_a]$ and $P[f_b]$ share $h$ IM, it follows that
      \begin{equation}
      P[f_a](\rho_j z) = h(\rho_j z) \Leftrightarrow P[f_b] (\rho_j z) = h(\rho_j z). \label{eq2.21}
      \end{equation}
      That is,
      \begin{equation}
        P[H_a](z)= z ^m h_1 (\rho_j z) \Leftrightarrow P[H_b] (z) = z ^m h_1 (\rho_j z)
        \end{equation}
        We claim that $P[H](z)\not\equiv z^m$. In fact, if $P[H] \equiv z^m,$ then $z = 0$ is the only possible
        zero of $H.$ If $H$ is  transcendental, then $H(z) = z^{\alpha} e^{Q(z)},$ for some non-negative integer $\alpha$ and
    a polynomial $Q.$  Thus $(H^{l_i})^{(m_i)}(z) = p(z) e^{l_i Q(z)},$ where $p(z) (\not\equiv 0)$ is a rational function.
    It follows that $P[H]$ is also transcendental, which is not the case. Now if $H$ is  rational and  $z = 0$ is a zero of $H$,
    then $H$ is a polynomial . Clearly, $\deg (P[H]) \geq l_0+ 1 > m$ which is again a contradiction.

    \medskip

    On compact subsets of $\mC$, not containing poles of $H$, we
    have that $P[H_j](z) - z^m h_1 (\rho_j z)$  $ \longrightarrow P[H](z)- z^m,$ spherically uniformly.
    Since $P[H](z)\not\equiv z^m,$ by Theorem \ref{THM1} and Theorem \ref{THM3}, $P[H](z)-z^m$ has at least two distinct zeros
     in $\mC.$ Now proceeding in the same way as in Case-1, we arrive at  a contradiction.\\
    Hence in all the possible cases $\mathcal{F}$ must be normal in $\mD.$
    \end{proof}

 \bigskip

    \begin{proof}[\textbf { Proof of Theorem 1.5}] Irrespective of any of the
    conditions (i), \ (ii), and (iii), the ideas used in Case-1 of
    the proof of Theorem \ref{THM4} lead us to the conclusion
    that $P[g](z)\not\equiv h(0)=h_0 \text{ in } \mC.$

    \medskip

    If condition (i) holds, then we claim that $P[g](z)- h_0$ has at most one zero in $\mC$ which would be in
    violation to the conclusions of Theorem \ref{THM1} and Theorem \ref{THM3} thereby proving the normality of
    $\mathcal{F}$. For, suppose that $P[g](z)-h_0$ has atleast two distinct zeros, say $\zeta_0$ and $\zeta_0 ^{*}$.
     By Hurwitz theorem, there exist points $\zeta_j \longrightarrow \zeta_0 \text{ and } \zeta_j ^{*} \longrightarrow
     \zeta_0^{*}$ such that
    $$P[f_j](z_j + \rho_j \zeta_j) - h(z_j + \rho_j \zeta_j) =0$$
    and
    $$P[f_j](z_j + \rho_j \zeta_j^{*}) - h(z_j + \rho_j \zeta_j^{*}) =0,$$
    for sufficiently large $j.$\\
    Since $P[f_j](z_j + \rho_j z) -h(z_j + \rho_j z)$ has at most one zero,
    which leads to a contradiction to the fact that $\zeta_0$ and $\zeta_0^{*}$ are distinct. Hence the claim follows.

    \medskip

    Next we prove the normality of $\mathcal{F}$ when condition (ii) holds.
    By Theorem \ref{THM1} and Theorem \ref{THM3}, $P[g](z)-h_0$ must have a zero,
     say $\zeta_0$ and hence $ g(\zeta_0)\neq \infty.$ Further, by Hurwitz theorem, for sufficiently large $j,$
     there exists a sequence $\{ \zeta_j \} $ converging to $ \zeta_0 $ such that
     $$P[f_j](z_j + \rho_j \zeta_j)-h(z_j + \rho_j \zeta_j)=0$$
     Thus, by hypothesis, we have
     $$|g_j(\zeta_j)| = \rho_j^{-\frac{m'}{l_0 + l'}} |f_j (z_j + \rho_j \zeta_j)| \geq \rho_j^{- \frac{m'}{l_0 + l'}} M$$
     Since $g(\zeta_0) \neq \infty$ in some neighbourhood $N$ of $\zeta_0$, it follows that for sufficiently large values of
     $j, \ g_j(z)$ converges uniformly to $g(z)$ in $N.$ Thus for given $\epsilon >0$ and for every $z \in N,$ we have
     $$|g_j(z) - g(z)|< \epsilon$$ for sufficiently large $j.$  Therefore, for sufficiently large values of $j,$ we have
     $$|g(\zeta_j)| \geq |g_j (\zeta_j)|- |g(\zeta_j) -g_j(\zeta_j)| > \rho_j^{- \frac{m'}{l_0 + l'}}M -\epsilon$$
     which implies that $g$ has a pole at $\zeta_0,$ which is not the case. \\

    \medskip

     Finally, we prove the normality of $\mathcal{F}$ when
     condition (iii) holds. As done in the preceding discussion, we
     find that

    $$P[f_j](z_j + \rho_j \zeta_j)-h(z_j + \rho_j \zeta_j)=0.$$
  Since $\alpha = {m'}/{(l_0 + l')}$; for some positive $l_i \textit{ and } m_i,$ we have
  $$|(g_j ^{l_i})^{(m_i)}(\zeta_j)| = \rho_j ^{m_i - \alpha l_i} |(f_j ^{l_i})^{(m_i)} (z_j + \rho_j \zeta_j)|$$
  $$ \leq M \rho_j ^{m_i - \frac{m' l_i}{l_0 + l'}} \longrightarrow 0 \textit{ as } j \longrightarrow \infty.$$
  Thus,
  $$(g^{l_i})^{(m_i)} (\zeta_0) = \lim_{j \longrightarrow \infty} (g_j ^{l_i})^{(m_i)} (\zeta_j) =0$$
  $$\Rightarrow P[g] (\zeta_0) =0 \neq h_0,$$
  which is not true.
  \end{proof}

\bibliographystyle{plain}

\end{document}